\documentclass[12pt]{article}
\usepackage{amsmath,amsfonts,amssymb,amsthm,amscd}
\title{Remarks on theories of free algebras and modules}
\date{\today}
\author{Anand Pillay\thanks{Supported by NSF grants DMS 1665035 and DMS-1760212}\\University of Notre Dame \and Philipp Rothmaler\\Bronx Community College, CUNY}

\newtheorem{Theorem}{Theorem}[section]
\newtheorem{Proposition}[Theorem]{Proposition}
\newtheorem{Definition}[Theorem]{Definition}
\newtheorem{Remark}[Theorem]{Remark}
\newtheorem{Lemma}[Theorem]{Lemma}

\newtheorem{Fact}[Theorem]{Fact}

\newtheorem{Question}[Theorem]{Question}
\newtheorem{Problem}[Theorem]{Problem}

\begin{document}
\maketitle

\begin{abstract} We ask some questions and make some observations about the (complete) theory $T_{\infty, {\mathcal V}}$ of free algebras in ${\mathcal V}$ on infinitely many generators, where ${\mathcal V}$ is a variety in the sense of universal algebra. We  focus on the case $T_{\infty, R}$ where ${\mathcal V}$ is the variety of $R$-modules ($R$ a ring).  Building on work in \cite{K-P}, we characterize when all models of $T_{\infty, R}$ are free, projective, flat, as well as when $T_{\infty, R}$ is categorical in a higher power.

\end{abstract}

\section{Introduction}

We discuss the theory of free algebras on infinitely many generators, with respect to a variety ${\mathcal V}$ in the sense of universal algebra, and with a specialization to the case where ${\mathcal V}$ is the variety of (left) $R$-modules for some unitary ring $R$. 

This work is, in a sense, a continuation of Baldwin-Shelah \cite{B-S}, Pillay-Sklinos \cite{P-S}, and Kucera-Pillay \cite{K-P}, but also connects with early work in the model theory of modules (such as \cite{S-E}) concerning when certain classes of $R$-modules (free, projective, flat) are elementary. This note answers some questions that  the first author raised in a seminar at Notre Dame when talking about the paper \cite{K-P}. 

The original context of Baldwin-Shelah was: suppose $L$ is a countable language with only function symbols, and $\mathcal V$ is a variety in this language ( in the sense of universal algebra).  Namely ${\mathcal V}$ is the class of $L$-structures defined by some set of equations  $\forall\bar x (t({\bar x}) = s({\bar x})$) where $s, t$ are terms of $L$.
Let $M$ be the free algebra in ${\mathcal V}$ on $\omega_{1}$ generators. Suppose that $M$ is $\omega_{1}$-saturated as an $L$-structure. What can we say, about $Th(M)$?  It was proved in \cite{B-S}  that the theory is $\omega$-stable, finite-dimensional (finitely many regular types, up to nonorthogonality), and with more information on the structure of the models. Somewhat more precise  statements and proofs were given in \cite{P-S}, where we also asked whether $Th(M)$ has to have finite Morley rank. 

There are various levels of generalizations of this, for example to uncountable languages as in \cite{K-P}, or to ``almost indiscernible theories" as in \cite{P-S} and \cite{K-P}.  Here, we will stick with free algebras, but try to generalize the problematic in different, and hopefully natural, directions, although we will work with possibly uncountable $L$.

Let us fix a variety ${\mathcal V}$ in a  language (or signature) $L$ of cardinality $\tau$. (Here the cardinality of $L$ denotes the number of function symbols in $L$ plus $\omega$.)   
For a cardinal $\kappa$ let $F_{\kappa}$ denote the free algebra in ${\mathcal V}$ on $\kappa$ many generators.
It is routine to show that for $\kappa$, $\lambda$ infinite $F_{\kappa}$ and $F_{\lambda}$ are elementarily equivalent.  So the common first order theory, which we will call $T_{\infty, {\mathcal V}}$, of the free algebras in ${\mathcal V}$ on infinitely many generators, is a complete theory in the language $L$, which is a canonical complete theory attached to the variety ${\mathcal V}$.

A somewhat imprecise question is
\begin{Problem} What can we say about the  theory $T_{\infty, {\mathcal V}}$?
\end{Problem}

As an example, when ${\mathcal V}$ is the variety of groups (in the language with identity, multiplication and inversion), then we know from Sela \cite{Sela},  that $T_{\infty, {\mathcal V}}$ is stable.  However this goes via proving the result for finitely generated free groups, and knowing the elementary inclusions of the free groups on $n$-generators for $n\geq 2$, whereby all the free groups on $\kappa$ generators for $\kappa \geq 2$ are stable. It is natural to ask whether there is a simpler approach just working with the free groups on infinitely many generators? For example the connectedness of the (theory of the) free group can be seen just by looking at the free groups on infinitely many generators \cite{Poizat}, \cite{Pillay-freegroup}. 

It is not hard to see that the condition that $F_{\tau^{+}}$ is ($\tau^{+}$)-saturated is equivalent to $F_{\kappa}$ being saturated for all $\kappa \geq \tau$  (see \cite{K-P}).  So we can interpret the condition that $F_{\tau^{+}}$ is saturated as saying that the ``standard" models of $T_{\infty, {\mathcal V}}$ are saturated.

When $L$ is countable,  the model theoretic consequences include that $T_{\infty, {\mathcal V}}$ is $\omega$-stable, with finitely many regular types and (roughly speaking) arbitrary models of $T_{\infty, \mathcal V}$ are generated by indiscernible sets \cite{B-S}, \cite{P-S}. 

For uncountable $L$, we only obtained (in \cite{K-P}) that $T_{\infty, {\mathcal V}}$ is superstable (but still ``finite-dimensional")  and it was asked in \cite{K-P} whether $T_{\infty, {\mathcal V}}$ is totally transcendental (every formula/type has ordinal valued Morley rank).  When  ${\mathcal V}$ is the variety of $R$-modules for a ring $R$, then$T_{\infty, \mathcal V}$ is indeed totally transcendental. 

In \cite{P-S} we asked whether $T_{\infty, \mathcal V}$ must have finite Morley rank (assuming countability of $L$), and a counterexample, was given in \cite{K-P}, for $\mathcal V$ the variety of $R$ modules for a certain ring $R$ of $2\times 2$ matrices. 

A natural question is to classify the varieties ${\mathcal V}$ with the property that the ``standard" models of $T_{\infty, {\mathcal V}}$ are saturated. As mentioned in \cite{B-S} these need not be all {\em stable varieties} (i.e. where all algebras are stable). 

A stronger condition (than $F_{\omega_{1}}$ being saturated) is that $T_{\infty, \mathcal V}$ is uncountably categorical.  Again, can we classify the varieties ${\mathcal V}$ with this property?

One may consider trying to say something about those ${\mathcal V}$ such that $T_{\infty, {\mathcal V}}$ is stable, and moreover to describe $T_{\infty, {\mathcal V}}$ in this situation, but as this includes the variety of all groups and the theory of the free group, it looks rather complicated.

On the other hand the condition that $T_{\infty, {\mathcal V}}$ is superstable would seem to be more tractable (and should imply totally transcendental). 

We refer to \cite{Pillay-book} for general stability theory and \cite{Prest} for the model theory of modules.

The first author would like to thank his earlier collaborators Sklinos and Kucera,  for their contribution to the overall project, as well as Prest for some earlier and helpful discussions.  Also thanks to John Baldwin  for pointing out possible connections to a question of Tarski.

\section{Modules}

A very specific class of varieties (in the sense of universal algebra), is the class of varieties of $R$-modules, as $R$ ranges over arbitrary, unitary rings (not necessarily commutative). 

It is interesting to study the questions mentioned in the introduction, for ${\mathcal V}$ the variety $_{R}Mod$ of (left) $R$ modules, some $R$.    This enterprise turns out to be closely related to classical results (\cite{S-E} for example) on describing rings $R$ for which certain classes of $R$-modules are elementary, as well as to the general model theory and  stability theory of modules. We will make some rather basic observations, which are thematically close to the content of Chapter 14 of \cite{Prest}  (see in particular the proof of Theorem 14.28 in \cite{Prest}).   The stability theory of flat modules was also studied by Rothmaler \cite{Rothmaler}.  In any case the general question of what can be said about the theory of free $R$-modules on infinitely many generators does not seem to have studied systematically before.

We will first describe briefly what was done in \cite{K-P}. We consider the category of left $R$-modules.  The language $_{R}L$  consists of $+,0,-$ and unary functions $\lambda_{r}$ for all $r\in R$ representing left multiplication by $r$.  $R$ itself is a (left) $R$-module and the free $R$-module on $\kappa$ many generators is just the direct sum $R^{(\kappa)}$ of $\kappa$ many copies of $R$.  We let $\tau$ denote the cardinality of $R$  (plus $\omega$). 

By a {\em projective} $R$ module,  we will mean a  direct summand of a free $R$-module. 

Consistent with notation in Section 1 will define $T_{\infty,R}$ to be the common (complete) theory of the free $R$-modules $R^{(\kappa)}$, $\kappa$ infinite.

\begin{Definition}
(i) $R$ is left perfect if $R$ has the descending chain condition on finitely generated left ideals,
\newline
(ii) $R$ is right coherent if every finitely generated right ideal is finitely presented.
\newline
(iii) $R$ is right artinian if $R$ satisfies the DCC on right ideals.

\end{Definition}

\begin{Remark} (i) As pointed out in \cite{Prest}, if $R$ is both left perfect and right coherent then it is totally trancendental as a a (left) module over itself.  In fact right coherence is equivalent to the $pp$-definable subgroups of $R$ being precisely the  finitely generated right ideals, and then left perfectness gives the dcc on pp-definable subgroups of $R$, which means totally transcendental. 
\newline
(ii) Right artinian imples left perfect and right coherent. 
\end{Remark}

It is well-known that any direct {\em product} $R^{I}$ of the (left) $R$ module $R$ is elementarily equivalent to the direct sum $R^{(I)}$.   Combining this with results of Chase  (as  in \cite{S-E}) has nice consequences:

Chase's theorem \cite{Chase} is the equivalence of (i) and (ii) below, and Sabbagh-Eklof \cite{S-E} add (iii)
\begin{Fact} The following are equivalent:
\newline
(i)  Any infinite direct product  $R^{I}$ is projective,
\newline
(ii) $R$ is left perfect and right coherent.
\newline
(iii) The class of projective $R$-modules is elementary. 
\end{Fact}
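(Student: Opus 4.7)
My plan is to close the cycle (iii)$\Rightarrow$(i)$\Rightarrow$(ii)$\Rightarrow$(iii), where (i)$\Leftrightarrow$(ii) is Chase's theorem \cite{Chase} and the remaining implications are the Sabbagh--Eklof contribution \cite{S-E}. This way the content splits cleanly into one purely algebraic ingredient and two model-theoretic observations.

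The implication (iii)$\Rightarrow$(i) is immediate from the elementary equivalence $R^{(I)}\equiv R^{I}$ recalled just above the statement. The free module $R^{(I)}$ is projective, so if the class of projective left $R$-modules is first-order axiomatizable it is closed under elementary equivalence, and $R^{I}$ is projective as well.

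For (i)$\Leftrightarrow$(ii) I would cite Chase \cite{Chase} directly. The direction (ii)$\Rightarrow$(i) combines two classical inputs: right coherence makes arbitrary products of flat left modules flat (so $R^I$, being a product of copies of the flat module ${}_RR$, is flat), while left perfectness is Bass's characterization that every flat left $R$-module is projective. The direction (i)$\Rightarrow$(ii) splits into extracting right coherence from flatness of $R^I$ (by another, easier, theorem of Chase) and extracting DCC on finitely generated left ideals from projectivity of $R^\omega$; the latter is the substantive algebraic content and is what I would quote wholesale.

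For (ii)$\Rightarrow$(iii) the plan is to use the standard facts recorded in Remark 2.2(i) and Chapter 14 of \cite{Prest}: right coherence is equivalent to the pp-definable subgroups of ${}_RR^n$ being precisely the finitely generated right ideals, which in turn allows one to write down a first-order axiomatization of the class of flat left $R$-modules (via the pp-formula description of flatness). Left perfectness (Bass) then collapses flat with projective, and so the class of projective left $R$-modules coincides with an elementary class.

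The main obstacle is the left-perfectness half of Chase's implication (i)$\Rightarrow$(ii): producing DCC on finitely generated left ideals from mere projectivity of $R^\omega$. This is the genuinely algebraic heart of the equivalence and is not to be reproved here — it would be invoked directly from \cite{Chase}. By contrast, the model-theoretic steps (closure under elementary equivalence, axiomatizability of flat modules under coherence) are routine given the tools developed in \cite{Prest}, and the rest of the cycle is bookkeeping.
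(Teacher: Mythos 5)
Your proposal is correct and matches the paper's treatment: the paper records this as a Fact with no proof, attributing (i)$\Leftrightarrow$(ii) to Chase and the addition of (iii) to Sabbagh--Eklof, exactly the two sources you defer to for the algebraic core. Your route for (ii)$\Rightarrow$(iii) (coherence gives an elementary class of flats, Bass's left-perfectness collapses flat with projective) is precisely the observation the paper itself makes at the start of Section 3, and the (iii)$\Rightarrow$(i) step via $R^{(I)}\equiv R^{I}$ is the same elementary-equivalence fact the paper recalls immediately before the statement.
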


We first give a slight improvement of Theorem 3.15 from \cite{K-P}.
\begin{Proposition}  The following are equivalent:
\newline
(i)  $R^{(\tau^{+})}$  is saturated  (equivalently every $R^{(\kappa)}$ for $\kappa\geq \tau$  is saturated (in its own cardinality)),
\newline
(ii)  Every model of $T_{\infty, R}$ is projective,
\newline
(iii) The class of projective $R$-modules is elementary. 
\end{Proposition}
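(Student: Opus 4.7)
The plan is to establish (i) $\Leftrightarrow$ (ii) by appealing to Theorem 3.15 of \cite{K-P} and then to close the loop by proving (ii) $\Leftrightarrow$ (iii); the ``slight improvement'' being made is precisely the inclusion of condition (iii) in the list.

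For (iii) $\Rightarrow$ (ii) I would simply observe that any model $M$ of $T_{\infty, R}$ is elementarily equivalent to the free module $R^{(\omega)}$, which is projective, so if the class of projective $R$-modules is first-order axiomatizable then $M$ itself is projective. For (ii) $\Rightarrow$ (iii) the key input is the classical fact noted just above the statement of the Fact, that $R^{I} \equiv R^{(I)}$ for every infinite $I$: then $R^{I} \models T_{\infty, R}$, so by (ii) every such direct product is projective, and Chase's theorem together with Sabbagh-Eklof (i.e., the Fact) then yields (iii).

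The hard part, if one insisted on reproving (i) $\Rightarrow$ (ii) from scratch rather than citing \cite{K-P}, would be to extract projectivity (as opposed to mere flatness or pure-projectivity) of an arbitrary model of $T_{\infty, R}$ from the saturation of $R^{(\tau^{+})}$. A natural first move is that saturation of $R^{(\tau^{+})}$ in cardinality $\tau^{+} > \tau$ forces it to be pure-injective; since $R^{(\omega)}$ is a direct summand, $R$ is $\Sigma$-pure-injective and hence totally transcendental as a left module over itself. One would then need to upgrade this to left perfectness plus right coherence of $R$, which is not immediate because DCC on pp-definable subgroups of ${}_{R}R$ coincides with DCC on finitely generated right ideals only once right coherence has been established (cf.\ the Remark). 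This is why the cleanest route is to cite \cite{K-P} for (i) $\Leftrightarrow$ (ii) and to attach (iii) to it separately from (ii) via the Fact.
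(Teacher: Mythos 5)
Your argument is essentially correct, but it is organized quite differently from the paper's, and the difference is worth spelling out. The two treatments of (ii) $\Rightarrow$ (iii) coincide exactly: both use $R^{I}\equiv R^{(I)}$ to see that every infinite power $R^{I}$ is a model of $T_{\infty,R}$, hence projective, and then invoke Fact 2.3. Your (iii) $\Rightarrow$ (ii) via preservation of projectivity under elementary equivalence is a clean shortcut that the paper does not take; the paper instead closes its cycle as (iii) $\Rightarrow$ (i) $\Rightarrow$ (ii). The substantive divergence is that you outsource the entire link between saturation and projectivity to Theorem 3.15 of \cite{K-P}, whereas the paper proves both nontrivial legs explicitly: for (i) $\Rightarrow$ (ii) it uses that (i) makes $T_{\infty,R}$ totally transcendental, so every model is pure-injective, elementarily embeds in some saturated $R^{(\kappa)}$, and is therefore a direct summand of a free module; for (iii) $\Rightarrow$ (i) it uses Fact 2.3 and Remark 2.2 to get total transcendence, realizes a saturated model of cardinality $\tau^{+}$ as a direct summand and elementary submodule of $R^{(\tau^{+})}$, and concludes by the fact that in a totally transcendental theory of modules elementary extensions of saturated models remain saturated. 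One caveat about your citation: as described in this paper, Theorem 3.15 of \cite{K-P} is the equivalence of (i) with ``$R$ is left perfect and right coherent,'' not with (ii); so strictly speaking it hands you (i) $\Leftrightarrow$ (iii) only after passing through Fact 2.3, and the point of the Proposition being an ``improvement'' is precisely that (ii) and (iii) are being added. Your loop still closes, since you prove (ii) $\Leftrightarrow$ (iii) independently, but the attribution should be to the ring-theoretic characterization plus Fact 2.3 rather than to a ready-made (i) $\Leftrightarrow$ (ii). Your side remarks on why a from-scratch proof of (i) $\Rightarrow$ (ii) is delicate (total transcendence of ${}_{R}R$ gives the dcc on pp-definable subgroups but not, by itself, right coherence) are accurate and show you have correctly located where the real work lies.
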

\begin{proof}
Assume (i). We know (\cite{K-P}) that $T_{\infty, R}$ is totally transcendental, in particular every model is pure-injective, also known as algebraically compact.   Let $M$ be an arbitrary model of $T_{\infty,R}$.  So $M$ is an elementary substructure of some saturated model, so of some $R^{(\kappa)}$, hence (by pure-injectivity) is a direct summand of $R^{(\lambda)}$ hence projective, so we get (ii).
\newline
Now assume (ii). As any infinite direct product $R^{I}$ is a model of $T_{\infty, R}$, we conclude that any such $R^{I}$ is projective, so we can apply Fact 2.3 to deduce (iii).
\newline
Finally assume (iii).  By Fact 2.3 and Remark 2.2, $T_{\infty, R}$ is totally transcendental.    We now want to prove that $R^{(\tau^{+})}$ is saturated.  As $T_{\infty,R}$ is totally transcendental, let $M$ a saturated model of cardinality $\tau^{+}$.  By assumption (iii) (in fact only (ii) is needed now), $M$ is projective, so a direct summand of some free $R$-module. An easy argument (basically downward Lowenheim Skolem) shows that $M$ is a direct summand of $R^{(\tau^{+})}$. As they both have the same theory, $M$ is an elementary substructure of $R^{(\tau^{+})}$. But for a totally transcendental theory of modules, any elementary extension of a $\tau^{+}$-saturated model is also $\tau^{+}$-saturated, and $R^{(\tau^{+})}$ is saturated, as required.

\end{proof} 

We also, by Fact 2.3, have the characterization of those $R$ for which $R^{(\tau^{+})}$ is saturated as precisely the left perfect and right coherent rings (which is what was stated in Theorem 3.15 of \cite{K-P}).  A special case of a question raised in \cite{P-S} is whether, under the conditions of Proposition 2.4, $T_{\infty, R}$ has finite Morley rank.  This would have been  a consequence of Exercise 2(b) in \cite{Prest}, but the Exercise is wrong, and  there is a counterexample (appearing in \cite{K-P} and also pointed out to us by Prest).

\vspace{5mm}
\noindent
We now want to slightly extend and/or generalize Proposition 2.4,  replacing projective by either  the weaker property flat, or  the stronger property free.

\begin{Definition} The $R$-module $M$ is flat iff whenever $m_{1},..,m_{n}\in M$, $r_{1},..,r_{n}\in R$ and $\sum_{i}r_{i}m_{i} = 0$, then there is a matrix $H$ over $R$ and sequence $m'$ from $M$ such that $Hm' = m$ and $rH = 0$.
\end{Definition}

There are other nice model-theoretic descriptions such as that for every $pp$ formula $\phi(x)$, $\phi(M) = \phi(R)M$.  (\cite{Rothmaler})

But in any case free implies projective implies flat (implies torsion-free).

Analogous to Fact 2.3 above, we have the following, where (i) iff (ii) is by Chase \cite{Chase} and (iii) is by Sabbagh and Eklof \cite{S-E}. 
\begin{Fact} The following are equivalent:
\newline
(i) Any infinite direct product $R^{I}$ is flat,
\newline
(ii) $R$ is right coherent,
\newline
(iii) The class of flat modules is elementary. 
\end{Fact}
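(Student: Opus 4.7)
The plan is to prove the cycle (iii) $\Rightarrow$ (i) $\Rightarrow$ (ii) $\Rightarrow$ (iii), exactly parallel to the proof of Proposition 2.4, exploiting once more the elementary equivalence $R^{I} \equiv R^{(I)}$ recalled at the start of this section.

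For (iii) $\Rightarrow$ (i): the free module $R^{(I)}$ is flat, so since $R^{I} \equiv R^{(I)}$ and flatness is first-order by hypothesis, $R^{I}$ is flat as well. For (i) $\Rightarrow$ (ii) we invoke Chase's theorem \cite{Chase}: contrapositively, a finitely generated right ideal that fails to be finitely presented yields a strictly ascending chain of finitely generated right submodules in the associated syzygy module, and this chain produces, in a sufficiently large product $R^{I}$, a relation $\sum_{i} r_{i} m_{i} = 0$ that cannot be resolved by any single finite matrix $H$ as required by Definition 2.5.

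For (ii) $\Rightarrow$ (iii) we axiomatize flatness directly from Definition 2.5. Right coherence precisely says that for each $n$ and each row $r = (r_{1}, \ldots, r_{n}) \in R^{n}$, the right $R$-submodule $\{h \in R^{n} : \sum_{i} r_{i} h_{i} = 0\}$ is finitely generated, say by columns $h^{(1)}, \ldots, h^{(k)}$. Any matrix $H$ with $rH = 0$ therefore has columns that are right $R$-combinations of the $h^{(\ell)}$; absorbing those coefficients into the existentially quantified $m'$ reduces the flatness clause for this particular $r$ to the first-order sentence $\forall m_{1}, \ldots, m_{n} \left( \sum_{i} r_{i} m_{i} = 0 \to \exists m'_{1}, \ldots, m'_{k} \bigwedge_{i} m_{i} = \sum_{\ell} h^{(\ell)}_{i} m'_{\ell} \right)$. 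Conjoining these sentences over all $n$ and all $r \in R^{n}$ axiomatizes the class of flat left $R$-modules.

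The main obstacle is this quantifier reduction in (ii) $\Rightarrow$ (iii): right coherence is exactly what replaces the a priori unbounded existential quantifier over matrices $H$ of arbitrary width by one of prescribed width $k$, and without such a bound one cannot render Definition 2.5 into a first-order schema. The harder half of the other nontrivial implication, (i) $\Rightarrow$ (ii), is the classical content of \cite{Chase} and is not reproved here.
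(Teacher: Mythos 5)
Your proposal is correct. The paper offers no proof of this Fact, simply citing Chase for (i)$\Leftrightarrow$(ii) and Sabbagh--Eklof for (iii); your argument supplies the standard one. The implication (iii)$\Rightarrow$(i) via $R^{I}\equiv R^{(I)}$ is exactly the device the paper uses elsewhere in this section, your quantifier-bounding argument for (ii)$\Rightarrow$(iii) (right coherence giving a fixed finite generating set for the solution module of $rx=0$, hence a first-order schema in place of the unbounded search over matrices $H$) is the classical proof from \cite{S-E}, and deferring the remaining implication to \cite{Chase} matches the level of detail the paper itself adopts.
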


We then obtain the following analogue of  Proposition 2.4:
\begin{Proposition} The following are equivalent:
\newline
(i)  All models of $T_{\infty, R}$ are flat,
\newline
(ii) The class of flat modules is elementary.

\end{Proposition}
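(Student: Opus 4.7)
The plan is to reduce both directions directly to Fact 2.7, exploiting the elementary equivalence $R^{I} \equiv R^{(I)}$ recalled earlier in Section 2, which makes every infinite direct power $R^{I}$ a model of $T_{\infty, R}$.

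For (ii) $\Rightarrow$ (i) I would simply observe that $R^{(\kappa)}$ is free, hence projective, hence flat, so it lies in the assumed elementary class of flat modules. Since any model of $T_{\infty, R}$ is elementarily equivalent to some $R^{(\kappa)}$, it lies in the same elementary class, hence is flat.

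For (i) $\Rightarrow$ (ii) I would take an arbitrary infinite index set $I$ and use $R^{I} \equiv R^{(I)}$ to conclude that $R^{I}$ is a model of $T_{\infty, R}$, hence flat by hypothesis. This verifies clause (i) of Fact 2.7 for all infinite $I$, and clause (iii) of that Fact is precisely the elementarity of the class of flat modules.

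I do not anticipate any serious obstacle. Unlike the proof of Proposition 2.4, there is no need here to invoke pure-injectivity, total transcendence, or splitting off as a direct summand: the bridge between ``all models of $T_{\infty, R}$ are flat'' and ``the class of flat modules is elementary'' is already wired up by Fact 2.7, once one knows that the flat members of $T_{\infty, R}$ include all infinite direct powers $R^{I}$. The underlying reason this proof runs more smoothly than the projectivity case is that flatness, as recalled after Definition 2.5, admits a clean pp-formula description ($\phi(M) = \phi(R)M$ for every pp formula $\phi$), whereas projectivity does not — so the only classical input we really use beyond Fact 2.7 is the elementary equivalence $R^{I} \equiv R^{(I)}$.
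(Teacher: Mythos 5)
Your proof is correct and is essentially the paper's own argument: for (i)$\Rightarrow$(ii) use $R^{I}\equiv R^{(I)}$ to see every infinite direct power is a flat model and invoke the Chase/Sabbagh--Eklof equivalence, and (ii)$\Rightarrow$(i) is immediate since every model is elementarily equivalent to a free (hence flat) module. The only quibble is a reference offset: the relevant fact is Fact 2.6 in the paper, not 2.7.
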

\begin{proof} Assuming (i), any infinite direct product of copies of $R$ is a model of $T_{\infty,R}$, so flat, so apply Fact 2.6 to get (ii).  (ii) implies (i) is immediate.
\end{proof}

\begin{Question}  What  can we say about the theory $T_{\infty, R}$ when all models are flat?
\end{Question}

Finally we want to try to replace projective by free in Proposition 2.4.  

The situation is rather more complicated and we can distinguish various properties of $R$, as follows:

\vspace{2mm}
\noindent
(I) Any infinite direct product $R^{I}$ is free,
\newline
(II) $T_{\infty,R}$ is $\tau^{+}$-categorical (equivalently $\kappa$-categorical for all $\kappa \geq \tau^{+}$). 
\newline
(III) All models of  $T_{\infty,R}$ are free.
\newline
(IV)   The class of free $R$-modules is elementary. 

\vspace{2mm}
\noindent
The class of rings such that (IV) holds was described by Sabbagh-Eklof \cite{S-E}:
\begin{Fact} The following are equivalent:
\newline
(i) The class of free modules is elementary,
\newline
(ii) $R$ is right Artinian and either (a) $R$ is local, or (b) $R$ is finite and $R/J$ is simple where $J$ is the Jacobson radical of the the $R$-module $R$   (intersection of maximal submodules). 

\end{Fact}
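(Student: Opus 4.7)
The strategy is to prove the two implications by linking freeness to projectivity through Fact 2.3, and then pinning down the extra structural conditions that distinguish freeness from projectivity.

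For the forward direction, suppose the class of free left $R$-modules is elementary. Since $R^{I} \equiv R^{(I)}$ and $R^{(I)}$ is free, every direct product $R^{I}$ is free, hence in particular projective. Fact 2.3 then gives that $R$ is left perfect and right coherent, and by Remark 2.2 the theory $T_{\infty,R}$ is totally transcendental, so every model is $\Sigma$-pure-injective. To upgrade ``left perfect and right coherent'' to right Artinian plus the dichotomy (a)/(b), I would use a Krull--Schmidt--Azumaya decomposition of $R^{I}$ into indecomposable pure-injective summands and exploit the fact that freeness (unlike projectivity) detects a rank invariant. For this rank to be first-order definable uniformly across all $R^{I}$, the ring must be rigid enough to admit essentially a single indecomposable projective summand (case (a), local), or else be sufficiently finite that a congruence condition on rank can be coded by finitely many formulas (case (b)). Getting DCC on right ideals out of this is the technical core.

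For the converse, in case (a), $R$ right Artinian is in particular left perfect and right coherent (Remark 2.2(ii)), so Fact 2.3 yields that the class of projective modules is elementary; and by Kaplansky's theorem, every projective module over a local ring is free, so free $=$ projective, and the class of free modules is elementary. In case (b), $R$ is again right Artinian, so projectives form an elementary class; the assumption that $R/J$ is simple together with finiteness of $R$ forces, via Wedderburn for $R/J$ and lifting idempotents modulo $J$, that all indecomposable projectives are isomorphic to a single $P = Re$ with $R \cong P^{n}$. The projectives are then the direct sums $P^{k}$, and the free ones are exactly those with $n \mid k$. Because $R$ is finite, the congruence $n \mid k$ can be expressed first-order, for instance by comparing $|M/\phi(M)|$ for suitable pp-formulas $\phi$, so freeness is elementary.

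The main obstacle will be the forward direction: extracting both DCC on right ideals and the rather specific dichotomy (local versus finite with $R/J$ simple) from the sole hypothesis that freeness is first-order definable, i.e., going beyond what Fact 2.3 already delivers from projectivity. This is where a careful Krull--Schmidt-type analysis of the pp-structure of $R^{I}$, together with the requirement that rank modulo the number of indecomposable summand classes be first-order detectable, has to be carried out to rule out all other ring structures.
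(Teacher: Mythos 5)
This statement is quoted in the paper as a Fact from Sabbagh--Eklof \cite{S-E}; the paper gives no proof of it, so there is nothing in-paper to compare your argument to, and it has to be judged on its own.

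Your converse direction is essentially sound. In case (a), right Artinian gives left perfect and right coherent, Fact 2.3 makes the projectives elementary, and Kaplansky's theorem over a local ring gives free $=$ projective. In case (b) one should be slightly more careful than ``comparing $|M/\phi(M)|$'': the cleanest finish is that the projectives are exactly the $P^{(k)}$, that $P^{(\kappa)}=R^{(\kappa)}$ is automatically free for infinite $\kappa$, and that the offending finite non-free projectives $P^{k}$ ($n\nmid k$) have cardinality $|R|^{k/n}$, not an integer power of $|R|$, so they are excluded by adding, for each finite $m$ that is not a power of $|R|$, the sentence ``there are not exactly $m$ elements.'' But the forward direction has a genuine gap, and you have located it yourself: Fact 2.3 only delivers that $R$ is left perfect and right coherent, and everything beyond that --- the DCC on right ideals and the dichotomy local versus finite with $R/J$ simple --- is replaced by the heuristic that ``the rank invariant must be first-order definable,'' which is not an argument. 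Left perfect plus right coherent is strictly weaker than right Artinian, so some new input is required, and your Krull--Schmidt paragraph never produces it. A concrete way to make progress, using only what the paper itself establishes later, is: if the frees are elementary then every model of $T_{\infty,R}$ is free (it is elementarily equivalent to $R^{(\omega)}$), so $T_{\infty,R}$ is $\tau^{+}$-categorical, hence by Proposition 2.11 there is a unique indecomposable projective $P$ with $R=P^{(r)}$; if $R$ is infinite then $P\equiv P^{(\kappa)}=R^{(\kappa)}$ is a model of $T_{\infty,R}$, hence free, hence $P=R$ and $R$ is local. Even this leaves the right Artinian claim unproved, which is precisely the part of Sabbagh--Eklof's theorem that your proposal does not reach.
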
 

 Note that the condition (II) implies that the free modules $R^{(\kappa)}$ are saturated (for $\kappa\geq \tau^{+}$),  because (II) implies that every  model of cardinality $\geq \tau^{+}$ is saturated.   First:

\begin{Lemma}
Consider the properties (I), (II), (III), (IV) above. Then (IV) implies (III) implies (II) implies (I).
\end{Lemma}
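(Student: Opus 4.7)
My plan is to handle the three implications in turn, with the main difficulty concentrated in the last, where I would leverage the totally transcendental structure supplied by (II) via Proposition 2.4. For (IV)$\Rightarrow$(III), I would use completeness of $T_{\infty, R}$: let $T_{\mathrm{free}}$ be a theory axiomatizing the free $R$-modules (existing by (IV)). Since $R^{(\omega)}$ is free and a model of $T_{\infty, R}$, it satisfies $T_{\mathrm{free}}$; every model of $T_{\infty, R}$ is elementarily equivalent to $R^{(\omega)}$, hence also satisfies $T_{\mathrm{free}}$, hence is free. For (III)$\Rightarrow$(II), I would do a cardinality count: if $\kappa \geq \tau^+$ and $M \models T_{\infty, R}$ with $|M| = \kappa$, then by (III), $M \cong R^{(\lambda)}$ for some infinite $\lambda$; since $|R^{(\lambda)}| = \max(\tau, \lambda) = \kappa > \tau$, we get $\lambda = \kappa$, so $M \cong R^{(\kappa)}$ is the unique model of its cardinality.

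For (II)$\Rightarrow$(I), fix an infinite index set $I$. Any direct product of copies of $R$ is elementarily equivalent to the corresponding direct sum, so $R^I$ is a model of $T_{\infty, R}$. If $|R^I| \geq \tau^+$, categoricity in $|R^I|$ (an equivalent form of (II)) immediately yields $R^I \cong R^{(|R^I|)}$, a free module. Otherwise (a delicate case that can only arise when the language is uncountable), I would form $N := R^I \oplus R^{(\tau^+)}$; since direct sums of elementarily equivalent modules remain elementarily equivalent, $N \equiv R^{(\tau^+)}$, so $N$ is a model of $T_{\infty, R}$ of cardinality $\tau^+$, and by $\tau^+$-categoricity, $N \cong R^{(\tau^+)}$. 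This exhibits $R^I$ as a direct summand of $R^{(\tau^+)}$.

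The main obstacle is then to bootstrap from direct summand of a free module to free. My plan is to invoke that by Proposition 2.4, (II) makes $T_{\infty, R}$ totally transcendental, so that every model is $\Sigma$-pure-injective and decomposes essentially uniquely as a direct sum of indecomposable pure-injectives (Krull--Schmidt--Azumaya). In particular $R$, being a summand of $R^{(\tau^+)}$, is itself pure-injective, and as a finitely generated module it decomposes as a finite direct sum $R \cong \bigoplus_{i=1}^{k} N_i^{c_i}$ of pairwise non-isomorphic indecomposables. Then $R^{(\tau^+)} \cong \bigoplus_{i} N_i^{(\tau^+)}$, and any tuple of infinite cardinals $(\lambda_i)_i$ with $\max_i \lambda_i = \tau^+$ would give a model of $T_{\infty, R}$ of size $\tau^+$ of the form $\bigoplus_i N_i^{(\lambda_i)}$; by uniqueness of the decomposition, $\tau^+$-categoricity would force $k = 1$. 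With a single indecomposable $N$, and $R^I$ a summand of $R^{(\tau^+)} \cong N^{(\tau^+)}$, $R^I$ itself would have to be of the form $N^{(\mu)} \cong R^{(\mu)}$, giving freeness.
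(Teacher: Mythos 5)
Your first two implications track the paper's proof closely: the paper dismisses (IV)$\Rightarrow$(III) as immediate (your completeness argument is the right way to spell that out), and for (III)$\Rightarrow$(II) it does exactly your cardinality count. For (II)$\Rightarrow$(I) you genuinely diverge. The paper only argues the case $|R^I|\geq\tau^{+}$, where categoricity applies directly, and asserts that it ``suffices to show that $R^{I}$ is free for large $I$'' without justifying the reduction. You correctly identify the residual case $|R^I|=\tau$ (which can occur when $R$ is uncountable, e.g.\ $|R|=2^{\omega}$ and $I$ countable) and handle it by embedding $R^I$ as a summand of $R^{(\tau^{+})}$ and then exploiting the totally transcendental structure; in doing so you essentially re-derive the content of the paper's Proposition 2.11 (a unique indecomposable $N$ with $R\cong N^{c}$). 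That is a more complete treatment of what property (I) literally asserts.

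There is, however, a gap at the very last step, and it sits exactly where the difference between (II) and (III) lives. Krull--Schmidt--Azumaya gives $R^{I}\cong N^{(\mu)}$ for some cardinal $\mu$, but your conclusion $N^{(\mu)}\cong R^{(\mu)}$ is only valid when $\mu$ is infinite (or a multiple of $c$): for finite $\mu$ not divisible by $c$ the module $N^{(\mu)}$ need not be free --- this is precisely the paper's example $P$ (the $2\times 1$ column vectors) over the ring of $2\times 2$ matrices over an infinite field, which witnesses that (II) does not imply (III). Cardinality does not rule this out, since $|N^{(\mu)}|=|N|=|R|$ for finite $\mu$ and $|R^{I}|=|R|^{|I|}$ can equal $|R|$. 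The fix is short: $R^{(I)}$ is a pure submodule of $R^{I}$, and being a free module on infinitely many generators it is a model of the totally transcendental theory $T_{\infty,R}$, hence pure-injective, hence a direct summand of $R^{I}$. Since $R^{(I)}\cong N^{(c\cdot|I|)}$, uniqueness of the indecomposable decomposition forces $\mu\geq|I|\geq\omega$, so $\mu$ is infinite and $N^{(\mu)}\cong N^{(c\cdot\mu)}\cong R^{(\mu)}$ as you claim. With that one line added, your argument is correct.
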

\begin{proof} (IV) implies (III) is immediate.  Assuming (III)  there is clearly a unique model of $T$ of cardinality $\kappa$ for each $\kappa\geq \tau^{+}$ so we get (II).   And for II implies (I), it suffices to show that $R^{I}$ is free for large $I$. But then $R^{I}$ is a model of $T$ of cardinality $\geq \tau^{+}$ so has to be $R^{(\kappa)}$ where $\kappa = |R^{I}|$.
\end{proof} 

Now we discuss what can be said about the reverse implications and we obtain complete answers.  First we  characterize when $T_{\infty,R}$ is categorical in a higher power.

\begin{Proposition} $T_{\infty, R}$ is $\tau^{+}$-categorical if and only $R$ is left perfect and right coherent (equivalently every model of $T_{\infty, R}$ is projective), and there is a unique projective indecomposable $P$ (and $R = P^{(r)}$ for some integer $r > 1$).  
\end{Proposition}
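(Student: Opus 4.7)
The plan is to prove both directions using Proposition 2.4, Lemma 2.9, and Krull--Schmidt for perfect rings. A crucial supporting ingredient is that under the hypothesis ``$R$ left perfect and right coherent,'' $R$ is totally transcendental as a left module over itself (Remark 2.2), hence pure-injective, so every direct summand $P_i$ of $R$ is itself pure-injective.

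For the easy direction $(\Leftarrow)$, I would assume $R$ is left perfect and right coherent with unique indecomposable projective $P$ and $R = P^{(r)}$. Proposition 2.4 gives that every model of $T_{\infty,R}$ is projective, and perfectness then forces every projective to decompose as a direct sum of indecomposable projective summands, each necessarily isomorphic to $P$. So every model is $P^{(\kappa)}$ for some cardinal $\kappa$, and since $R^{(\omega)} \cong P^{(\omega)}$, the models of $T_{\infty,R}$ are exactly the $P^{(\kappa)}$ with $\kappa$ infinite. For $\kappa \geq \tau^{+}$ one has $|P^{(\kappa)}| = \kappa$, so there is a unique model in each such cardinality, yielding $\tau^{+}$-categoricity.

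For $(\Rightarrow)$, assume $T_{\infty,R}$ is $\tau^{+}$-categorical. Lemma 2.9 delivers (I): every $R^{I}$ is free, hence projective, so Fact 2.3 forces $R$ to be left perfect and right coherent, and Proposition 2.4 yields that every model is projective. Perfectness provides a Krull--Schmidt decomposition $R = P_1 \oplus \cdots \oplus P_n$ into indecomposable projective summands, and every indecomposable projective $R$-module is isomorphic to some $P_i$. The key step is to show all the $P_i$ are mutually isomorphic. I would argue by contradiction: suppose $P_1 \not\cong P_2$ and form
\[M_1 = P_1^{(\tau^+)} \oplus P_2^{(\omega)} \oplus \bigoplus_{j \geq 3} P_j^{(\omega)}, \qquad M_2 = P_1^{(\omega)} \oplus P_2^{(\tau^+)} \oplus \bigoplus_{j \geq 3} P_j^{(\omega)}.\]
Both have cardinality $\tau^{+}$ and, since all multiplicities are infinite, both should be elementarily equivalent to $R^{(\omega)} = \bigoplus_i P_i^{(\omega)}$. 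Krull--Schmidt distinguishes them (different multiplicities of $P_1$), so they are non-isomorphic models of $T_{\infty,R}$ of size $\tau^{+}$, contradicting categoricity. Hence $R = P^{(n)}$ for a single indecomposable projective $P$.

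The main obstacle I anticipate is rigorously certifying that $M_1$ and $M_2$ really are elementarily equivalent to $R^{(\omega)}$. This rests on the standard invariants theorem in the totally transcendental model theory of modules: each model of a t.t.\ theory of modules decomposes as a direct sum of indecomposable pure-injectives, and elementary equivalence is determined by the multiplicities of each indecomposable summand, with infinite multiplicities indistinguishable from each other. The $P_i$ are indecomposable pure-injective, being direct summands of the pure-injective module $R$, so the machinery applies. Once this is in place, the Krull--Schmidt step closes the argument; the condition ``$r > 1$'' in the statement is most naturally read as ``$r \geq 1$,'' since the local/indecomposable subcase $r = 1$ (covered by Fact 2.9(a)) is handled by exactly the same reasoning.
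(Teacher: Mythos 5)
Your proof is correct and follows essentially the same route as the paper: Lemma 2.10 and Fact 2.3 give left perfect plus right coherent, the Krull--Schmidt decomposition of $R$ over a perfect ring produces finitely many indecomposable projectives, and categoricity forces a single isomorphism class. Where the paper appeals to unidimensionality and ``an easy argument as in Sabbagh--Eklof'' to conclude $k=1$, you spell that argument out explicitly by varying multiplicities of non-isomorphic indecomposable pure-injective summands to produce two non-isomorphic models of cardinality $\tau^{+}$; your reading of ``$r>1$'' as ``$r\geq 1$'' is also consistent with the paper's own Remark 2.12, which explicitly allows the case $r=1$.
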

\begin{proof} Assume $T_{\infty,R}$ to be $\tau^{+}$-categorical. Then   by Lemma 2.10 and Fact 2.3, $R$ is left perfect and right coherent.  As mentioned in \cite{S-E}, there are 
finitely many indecomposable projectives $P_{1},..,P_{k}$, and every model of $T_{\infty, R}$ is a direct sum of copies of the $P_{i}$ and moreover $R$ itself is of the form $P_{1}^{(r_{1})} 
\oplus ... \oplus  P_{k}^{(r_{k})}$ where $r_{1},..,r_{k}$ are positive integers. As $T$ is $\tau^{+}$-categorical   (so unidimensional) an easy argument as in \cite{S-E} shows that $k=1$.  
So we get the right hand side.
\newline
Conversely, assume the right hand side.   So every model of $T_{\infty, R}$ is projective, so a direct sum of copies of $P$ (noting $T_{\infty, R}$ is also tt).   So clearly $T_{\infty, R}$ is $\tau^{+}$-categorical. 
\end{proof} 

\begin{Remark} Assume the right hand side of Proposition 2.11.  Then there are two cases:
\newline
Case 1.  $R$ is finite.  Then the analysis of  \cite{S-E}  gives that $R/J$ is simple (as a ring) and of course $R$ is artinian.  And in this situation the class of free $R$ modules is elementary.
\newline
Case 2. $R$ is infinite, hence $P$ is infinite.  Let $P'$ be a large elementary extension of $P$, and we see as before that $P'$ is free, so a model of $T_{\infty, R}$. So $P$ is also a model of $T_{\infty, R}$. Now if $P$ is itself free, which means $r=1$, and $P = R$, then as in \cite{S-E}, $R$ is artinian and local, and again the class of free $R$-modules is elementary. 
If $P$ is not free then we see that  not every model of $T_{\infty, R}$ is free and so also the class of frees is not elementary.
\end{Remark}

So from Remark 2.12 we conclude: 
\begin{Proposition}
(i)  Let $R$ be left coherent, and right perfect,  with a unique projective indecomposable module $P$ which is infinite but not equal to $R$.  Then $T_{\infty, R}$ is $\tau^{+}$-categorical, but not every model of $T_{\infty, R}$ is free.
\newline
(ii) Suppose that every model of $T_{\infty, R}$ is free, then the class of free $R$-modules is elementary.
\end{Proposition}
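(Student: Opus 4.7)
Proposition 2.13 is essentially a bookkeeping consequence of Remark 2.12 together with Propositions 2.4 and 2.11, and my plan is to prove it by invoking those results in the right order.

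For part (i), the hypothesis is precisely the right-hand side of Proposition 2.11, so $T_{\infty, R}$ is immediately $\tau^{+}$-categorical. To produce a model that is not free, I would follow Case 2 of Remark 2.12: since $P$ is infinite, take an elementary extension $P'\succ P$ of cardinality $\geq \tau^{+}$; using total transcendence of $T_{\infty,R}$ together with $\tau^{+}$-categoricity (which forces every model of cardinality $\geq \tau^{+}$ to be isomorphic to some $R^{(\kappa)}$), one concludes that $P'$ is free, hence a model of $T_{\infty,R}$, and by elementary equivalence so is $P$ itself. But $P$ cannot be free: a decomposition $P\cong R^{(s)}$ together with indecomposability would force $s=1$ and $P=R$, contradicting the hypothesis.

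For part (ii), suppose every model of $T_{\infty, R}$ is free. Since free implies projective, Proposition 2.4 yields that $R$ is left perfect and right coherent. The hypothesis further gives $\tau^{+}$-categoricity by a cardinality count: any model of cardinality $\kappa \geq \tau^{+}$ equals $R^{(\lambda)}$ with $\max(\tau,\lambda)=\kappa$, forcing $\lambda=\kappa$ and uniqueness. Proposition 2.11 then yields $R = P^{(r)}$ for a unique projective indecomposable $P$. I would then split as in Remark 2.12: if $R$ is finite (Case 1), Sabbagh--Eklof (Fact 2.9) immediately gives elementariness of the class of free $R$-modules; if $R$ is infinite (Case 2), then $P$ is infinite, so $P$ is a model of $T_{\infty,R}$ by the same elementary-extension argument as in (i), hence free by hypothesis, hence $P=R$ by indecomposability, landing us in sub-case 2(a), where Fact 2.9 again concludes.

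The one nonformal ingredient, which I expect to be the main obstacle, is the assertion (taken from Case 2 of Remark 2.12) that $P$ itself is a model of $T_{\infty,R}$ when $P$ is infinite. I would make this watertight by first observing that $P$, as a direct summand of $R$, is projective and pure-injective (since the theory of $R$ over itself is totally transcendental by Remark 2.2), then taking a large elementary extension $P'\succ P$ of cardinality $\geq\tau^{+}$, and finally invoking $\tau^{+}$-categoricity together with the structural description of models in Proposition 2.11 to force $P'\cong R^{(\kappa)}$; then $P'$ is a model and hence so is $P$.
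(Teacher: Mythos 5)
Your overall route is the paper's: Proposition 2.13 is presented there as a direct consequence of Remark 2.12 (whose Case 2 is exactly your elementary-extension argument) together with Proposition 2.11 and Lemma 2.10, so your case division and the appeals to Fact 2.9 in part (ii) all match the intended proof. (Incidentally, the hypothesis of (i) as printed reads ``left coherent and right perfect''; like you, one should read it as ``left perfect and right coherent'', i.e.\ the right-hand side of Proposition 2.11.)

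However, the step you yourself single out as the main obstacle --- that the infinite indecomposable projective $P$ is a model of $T_{\infty,R}$ --- is not repaired by your proposed fix, which is circular. You take a large elementary extension $P'\succ P$ and then invoke $\tau^{+}$-categoricity of $T_{\infty,R}$ and ``the structural description of models in Proposition 2.11'' to force $P'\cong R^{(\kappa)}$. But both of those apply only to structures already known to satisfy $T_{\infty,R}$, whereas $P'$ is a priori only a model of $Th(P)$; showing that $Th(P)=T_{\infty,R}$ is precisely what is at stake. Pure-injectivity of $P$ does not close this gap either. The non-circular argument (the one the paper's phrase ``as before'' is pointing at) runs through Fact 2.3: since $R$ is left perfect and right coherent, the class of projective modules is elementary, so from $P'\equiv P$ and $P$ projective one gets $P'$ projective; over a left perfect ring every projective module is a direct sum of indecomposable projectives, and $P$ is the unique one, so $P'\cong P^{(\mu)}$, with $\mu$ infinite because $|P'|>|P|$; finally $P^{(\mu)}\cong (P^{(r)})^{(\mu)}\cong R^{(\mu)}$, so $P'$ is free of infinite rank, hence a model of $T_{\infty,R}$, and therefore so is its elementary substructure $P$. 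With that substitution both parts of your proof go through: the non-freeness of $P$ via indecomposability and $P\neq R$, the cardinality count giving (III)$\Rightarrow$(II), and the two-case appeal to Fact 2.9 in part (ii) are all correct and agree with the paper.
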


Let $F$ be an infinite field, and $R$ the ring of $2\times 2$ matrices over $F$. Then $R$ satisfies the conditions in Proposition 2.13(i).  The unique projective indecomposable $P$  is the set of $2\times 1$ matrices over $F$  (as a left $R$ module). So $P$ is not free (but note  $P\oplus P= R$).

Finally, by the discussion on p. 640 of \cite{S-E} we see that there is  $R$ (even commutative) such that  $R^{I}$ is free for all infinite $I$, but $T_{\infty, R}$ is not $\tau^{+}$-categorical. 

Hence in terms of trying to reverse the arrows in Lemma 2.10  we have
\begin{Proposition} (i)  (III) and (IV) are equivalent.
\newline
(ii)  The implications (III) implies (II), and (II) implies (I) are strict. 
\end{Proposition}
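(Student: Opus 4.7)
The plan is to read off Proposition 2.14 from material assembled earlier in this section: both the equivalence in (i) and the separating examples for (ii) have essentially been prepared by the preceding discussion, and the proof should be a matter of citation and bookkeeping rather than introducing any new argument.

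For part (i), the implication (IV) $\Rightarrow$ (III) is included in Lemma 2.10, and the reverse (III) $\Rightarrow$ (IV) is exactly the content of Proposition 2.13(ii). So the equivalence is immediate. For part (ii), I need one witnessing ring for each strict implication. To see that (II) does not force (III), I would take $R = M_2(F)$ with $F$ an infinite field, the example displayed just after Proposition 2.13: it was verified there that $R$ is left perfect and right coherent, with a unique indecomposable projective $P$ (the column module) which is infinite and satisfies $R = P \oplus P \ne P$. By Proposition 2.11 the theory $T_{\infty,R}$ is $\tau^+$-categorical, so (II) holds, while $P$ is a model of $T_{\infty,R}$ that is not free, so (III) fails. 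To see that (I) does not force (II), I would invoke the commutative ring cited from p.~640 of \cite{S-E}, for which every infinite power $R^I$ is free but $T_{\infty,R}$ is not $\tau^+$-categorical.

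The main content underlying this proposition is really the nontrivial direction (III) $\Rightarrow$ (IV), as the other relations in the chain are either straightforward implications or rest on explicit counterexamples. Passing from \emph{every model of $T_{\infty,R}$ is free} to \emph{the class of all free $R$-modules (including finitely generated ones) is elementarily axiomatizable} is not formal, but it has already been handled in Remark 2.12 by splitting cases on whether $R$ is finite or infinite, and in the infinite case on whether the unique indecomposable projective equals $R$, and then invoking the classification of \cite{S-E}. With that in hand, no further obstacle remains at the level of Proposition 2.14 itself.
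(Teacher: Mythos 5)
Your proposal is correct and follows exactly the route the paper intends: the paper states Proposition 2.14 without a displayed proof precisely because it is assembled from Lemma 2.10 ((IV)$\Rightarrow$(III)), Proposition 2.13(ii) ((III)$\Rightarrow$(IV)), the $2\times 2$ matrix ring over an infinite field for the strictness of (III)$\Rightarrow$(II), and the commutative example from p.~640 of Sabbagh--Eklof for the strictness of (II)$\Rightarrow$(I). Your identification of (III)$\Rightarrow$(IV) (via the case analysis of Remark 2.12) as the only nontrivial content is also accurate.
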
 

\section{Further remarks}
Fact 2.6 states that the class of flat $R$ modules is elementary if and only if $R$ is right coherent. Bass \cite{Bass} proved that flat= projective if and only if $R$ is left perfect. So by Fact 2.3 we have the conclusion that the class of projectives is elementary iff the class of flats is elementary and flat = projective (right implies left being trivial).

Is there a similar relationship between projectives and frees.  In fact the frees being elementary does not imply that projective = free.  The case of Fact 2.9 (ii)(b) where $R$ is finite, $R/J$ is simple (and $R$ is not local) will give a counterexample. 

However we do have:

\begin{Lemma} Suppose $R$ is infinite. Then the class of free $R$ modules is elementary if and only if the class of projective $R$-modules is elementary and projective $R$-modules are free.
\end{Lemma}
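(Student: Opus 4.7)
The backward direction is immediate: if projectives form an elementary class and every projective is free, then the class of free $R$-modules coincides with the class of projectives, hence is elementary. So the work is in the forward direction.

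For the forward direction, I would invoke Fact 2.9. Assuming the class of free $R$-modules is elementary, $R$ must be right Artinian and either (a) local, or (b) finite with $R/J$ simple. Since $R$ is infinite, case (b) is ruled out, and we are in case (a): $R$ is right Artinian and local. By Remark 2.2(ii), right Artinian rings are left perfect and right coherent, so by Fact 2.3 the class of projective $R$-modules is elementary.

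It remains to show that every projective $R$-module is free. This is precisely Kaplansky's theorem: over a local ring, every projective module is free (\cite{Kaplansky} for the noncommutative statement). This handles everything with no delicate calculation.

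The main thing to verify carefully is simply that the ``local'' in case (a) of Fact 2.9 is the usual notion (unique maximal one-sided ideal), which is the hypothesis needed for Kaplansky's theorem; the rest is a straightforward chaining of facts already collected in the paper. The only new ingredient beyond the excerpt is Kaplansky's theorem, which has to be cited as an external reference.
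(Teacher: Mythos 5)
Your proof is correct, and it takes a different route from the paper's. The paper argues purely model-theoretically: from the frees being elementary (property (IV)) it passes through Lemma 2.10 to get that all models of $T_{\infty,R}$ are free, hence projective, and then invokes Proposition 2.4 to conclude that the projectives form an elementary class; notably, the paper's written proof stops there and does not explicitly address why projective modules are then free. You instead go through the ring-theoretic characterization in Fact 2.9: since $R$ is infinite, case (ii)(b) is excluded, so $R$ is right Artinian and local; Remark 2.2(ii) and Fact 2.3 then give that the projectives are elementary, and Kaplansky's theorem (projectives over a local ring are free, valid in the noncommutative setting where local means $R/J$ is a division ring) supplies the remaining clause. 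Your approach has the advantage of actually completing the ``projective implies free'' half, at the cost of importing one external theorem; an alternative that stays closer to the paper's toolkit would be to note that $R$ is left perfect, so (as recalled at the end of Section 3 from Eklof--Sabbagh) every projective is a direct sum of indecomposable projectives each of which is a summand of $R$, and locality forces the unique indecomposable projective to be $R$ itself, whence projectives are free. Either way, your argument is sound and, if anything, more complete than the one printed in the paper.
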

\begin{proof}  The nontrivial direction is left to right.  Assume that the class of free $R$ modules is elementary.  By 2.10 and 2.4 the class of projective $R$-modules is elementary. 

\end{proof}

 Let us mention in passing that describing the theory $T_{\infty, R}$ when $R$ is left perfect may be intereresting. For example in Eklof-Sabbagh it is observed that under this hypothesis any projective $R$-module is uniquely a direct sum of indecomposable projectives, and moreover (in fact as a consequence) any indecomposable projective is a direct summand of $R$ (as left $R$-modules). 

\vspace{5mm}
\noindent
When the first author gave an online talk on the material in this paper, John Baldwin asked about the connection with a conjecture of Tarski that if ${\mathcal V}$ is a variety (in a countable language) which is ``uncountably categorical" (i.e. for some/any uncountable $\kappa$ any two algebras in $\mathcal V$ of cardinality $\kappa$ are isomorphic) then any infinite algebra in ${\mathcal V}$ is free. The only place we saw this conjecture mentioned was in the paper \cite{Baldwin-Lachlan} where the authors Baldwin and Lachlan stated that they had heard about the conjecture from McNulty.  Actually a counterexample to the conjecture appears in Section 1 of \cite{Baldwin-Lachlan} where a variety is obtained from the variety of vector spaces over an infinite field, by a certain construction.

In any case the example mentioned above where $R$ is the ring of $2\times 2$ matrices over an infinite (say countable) field $F$, gives another (fairly natural)  counterexample to this conjecture of Tarski.  The reader is referred to Example 2 on page 9 of \cite{Prest}, from which one sees that with $R$ above, {\em any} $R$-module is (uniquely) a direct sum of copies of the unique indecomposable $P$ which is the set of $2\times 1$ matrices over $F$. So ${\mathcal V}$ (not only $T_{\infty, R}$) is uncountably categorical, but $P$ is an infinite algebra in ${\mathcal V}$ which is not free.

\end{document}